\documentclass[conference]{ieeeconf}
\IEEEoverridecommandlockouts

\usepackage{cite}
\usepackage{amsthm,amsmath,amssymb,amsfonts}
\usepackage{graphicx}
\usepackage{textcomp}
\usepackage{xcolor}
\usepackage{hyperref}

\usepackage{silence}
\WarningFilter[captions]{caption}{Unknown document class (or package)}
\WarningFilter[sectiontitles]{hyperref}{Token not allowed in a PDF string}

\ActivateWarningFilters[captions]
\usepackage{subcaption}
\DeactivateWarningFilters[captions]

\usepackage{balance}
\hypersetup{hidelinks}
\usepackage{mathtools}
\mathtoolsset{showonlyrefs}
\usepackage[normalem]{ulem}
\usepackage{fontawesome5}

\usepackage{algorithm}
\usepackage{algorithmicx}
\usepackage{algcompatible}
\usepackage{tikz}
\usepackage{pgfplots}
\usepgfplotslibrary{groupplots}
\pgfplotsset{compat=1.18}

\newlength\figW
\newlength\figH

\newtheorem{definition}{Definition}
\newtheorem{lemma}{Lemma}
\newtheorem{theorem}{Theorem}

\newtheorem{mechanism}{Mechanism}
\newtheorem{problem}{Problem}

\newcommand{\ER}{Erd\H{o}s-R\'{e}nyi }
\newcommand{\FN}{Fr\'echet-Nikodym }
\newcommand{\G}{\mathcal{G}}
\newcommand{\M}{\mathcal{M}}
\newcommand{\Prob}{\mathbb{P}}
\newcommand{\Adj}{\mathrm{Adj}}
\newcommand{\uti}{d_\Delta}


\def\BibTeX{{\rm B\kern-.05em{\sc i\kern-.025em b}\kern-.08em
    T\kern-.1667em\lower.7ex\hbox{E}\kern-.125emX}}

\title{Generating Differentially Private Networks \\ with a Modified
\ER Model} 


\author{Huaiyuan Rao, Calvin Hawkins, Alexander Benvenuti, Matthew Hale\textsuperscript{1}\thanks{\textsuperscript{1}School of Electrical and Computer Engineering, Georgia Institute of Technology, Atlanta, GA, USA.
Emails: {\texttt{\{hrao43, chawkins64, abenvenuti3, matthale\}@gatech.edu}}}
\thanks{
This work was supported by 
NSF under CAREER grant 2422260 and Graduate Research Fellowship grant DGE-2039655,
AFOSR under grant FA9550-19-1-0169, and
ONR under grant N00014-21-1-2502.}}

\begin{document}
\maketitle
\begin{abstract}
    Differential privacy has been used to privately calculate numerous network properties,
    but existing approaches often require the development of a new privacy mechanism 
    for each property of interest. 
    Therefore, we present a framework for generating entire networks in a differentially private way. 
    Differential privacy is immune to post-processing, which allows for \emph{any} 
    network property to be computed and analyzed for a private output network, 
    without weakening its protections. 
    We consider undirected networks and develop a differential privacy
    mechanism that takes in a sensitive network and outputs a private network by randomizing its edge set.    
    We prove that this mechanism does provide differential privacy to a network's edge set, 
    though it induces a complex distribution over the space of output graphs.     
    We then develop an equivalent 
    privacy implementation using a modified Erd\H{o}s-R{\'e}nyi model
     that 
    constructs an output graph edge by edge, and it
    is efficient and easily implementable, 
    even on large complex networks. 
    Experiments implement~$\varepsilon$-differential privacy with $\varepsilon=2.5$ when computing graph Laplacian spectra, and these results show the proposed mechanism incurs $49.34\%$ less error than the current state of the art.  
\end{abstract}

\section{Introduction}
Graphs are used to model complex networks in applications including social systems~\cite{aggarwal2011introduction, tabassum2018social}, epidemics~\cite{pare2020modeling, chen2025scalable}, and multi-agent control systems~\cite{shi2020survey, qin2016recent}.
Graphs often use nodes to model individuals and edges to model connections
between them, e.g., cohabitation. 
Graphs can therefore be sensitive 
because their edges can encode one's personal relationships and other sensitive information. 
Simultaneously, there is often a need to analyze graphs for various properties,
such as the counts of certain subgraphs, and releasing this information may inadvertently
reveal sensitive details about a graph.


Indeed, while it is perhaps surprising, it is well-established that even scalar-valued
graph properties can reveal information about a graph, including information
about individuals~\cite{ding2018privacy, sun2019analyzing, day2016publishing, chen2021edge, hawkins2023node, hay2009accurate, gupta2012iterative, wang2013learning, nguyen2016detecting, proserpio2014calibrating}.
The risk of leaking sensitive information is only increased if multiple graph properties are released, e.g.,
a graph's degree sequence and counts of certain subgraphs it contains.
What results is a need to analyze and share graph properties, alongside a need
to conceal information about individuals that is contained within those graphs. 


Differential privacy has been used to privatize a wide array of graph analyses of this kind.
Broadly, differential privacy ensures that outputs of database queries are insensitive to the addition, removal, 
and/or change
of any single individual's information in a database. 
This formalism ensures information about individuals is provably
hard to infer from privatized data. 
By treating a graph as a database, 
differential privacy has been applied
to computing graph properties including subgraph counts~\cite{sun2019analyzing, ding2018privacy, blocki2013differentially, kasiviswanathan2013analyzing}, degree distributions~\cite{day2016publishing}, the spectra of graph Laplacians~\cite{chen21,hawkins2023node, chen2024bounded}, clustering coefficients~\cite{wang2013learning}, and cut queries~\cite{gupta2012iterative}.
These mechanisms all enable the release of privatized graph properties while provably protecting individual agents' connections
in a network. 

However, those approaches have required the development of a new privacy mechanism for each of those properties, which
has required privacy engineers to anticipate which graph properties will be privately
computed in the future. 
There are various quantitative graph properties that do not have a purpose-built privacy
mechanism, and it is unclear how one could privately compute those properties. 

Therefore, in this paper we develop a new privacy mechanism that generates an entire graph in a differentially
private way. We use edge differential privacy, which privatizes the edge set of a graph, and
we first develop a privacy mechanism
over the space of graphs using
the exponential mechanism~\cite{mcsherry2007mechanism}. 
Then we derive a closed form for the probability distribution 
this approach induces over the space of graphs, though it is complex
and difficult to  directly sample from. 

Therefore, we next design 
a modified \ER (E-R) model to generate random output graphs edge by edge. 
The conventional E-R model~\cite{erdds1959random} includes each edge with the same probability,
and ours is ``modified'' because it divides edges into two types,  
and edges of different types appear with different probabilities. 
We derive edge probabilities for this E-R model that make it equivalent to the exponential
mechanism, which provides privacy with much lower computational complexity. 
Because differential privacy is immune to post-processing, \emph{any} graph properties
can be computed with a private output graph, without weakening privacy's protections
and without requiring a purpose-built mechanism for each one.

To summarize, our contributions are threefold: 
\begin{enumerate}
    \item We design the graph exponential mechanism, which synthesizes an entire differentially private graph (Mechanism~\ref{mech:em}, Theorem~\ref{thm:mech1_dp}).
    \item We develop a modified E-R model, which provides a computationally efficient implementation of the exponential mechanism (Mechanism~\ref{mech:2}, Theorem~\ref{thm:equiv}).
    \item We show in simulation that our mechanism attains~$49.34$\% less error than a state-of-the-art mechanism when computing graph Laplacian spectra (Section~\ref{sec:IV}). 
\end{enumerate}

We point out that~\cite{sala2011sharing, mir2012differentially,qin2017generating} 
also generate private output graphs, but we fundamentally differ from them.
Those works compute certain graph properties (e.g., degree sequences), privatize
their values, and then synthesize graphs whose properties (exactly or approximately) 
take on those values. The accuracy of output graphs
is biased in favor of the chosen graph properties, but 
the accuracy of this approach is unclear for 
other unrelated graph properties. 

In contrast, our approach directly synthesizes an output graph from a sensitive
input graph and does not bias its accuracy toward any particular property. 
We show that it outperforms the state of the art by $49.34$\% when computing
graph Laplacian spectra, which illustrates the success of our approach. 
Prior work has applied differential privacy to trajectory-valued 
data~\cite{le2013differentially,yazdani20},
various properties of Markov processes~\cite{uaipaper,benvenuti2024differentially}, and
other problems in control. 
To the best of our knowledge,
this work is the first to generate and analyze entire private
graphs for control-oriented analysis, namely computing
graph Laplacian spectra. 

The rest of the paper is organized as follows. Section~\ref{sec:II} provides background and problem statements. Section~\ref{sec:III} develops the graph exponential mechanism to generate privacy-preserving graphs.
Section~\ref{sec:eff_mech} derives an efficient implementation of the graph exponential mechanism
using a modified E-R model.
Section~\ref{sec:IV} compares the accuracy of the proposed mechanism to the current state of art for privatizing graph Laplacian spectra, and Section~\ref{sec:V} concludes.

\textit{Notation:} We use $\mathbb{R}$ to denote the real numbers, $\mathbb{Z}$
to denote the integers, and $\mathbb{N}$ to denote the positive integers. 
For $n\in \mathbb{N},$ we use $[n]$ to denote the set $\{1,2, \dots, n\}$. We use $|S|$ to denote the cardinality of a finite set $S$, and we use $S_1\Delta S_2=(S_1\backslash S_2) \cup (S_2\backslash S_1)$ to denote the symmetric difference of sets $S_1$ and $S_2$. For $n\in \mathbb{N}$, we use $\G_n$ to denote the set of 
simple, undirected, unweighted graphs on $n$ nodes. We use $\lfloor \cdot \rfloor$ to denote the floor function.

\section{Background and Problem Formulation} \label{sec:II}
In this section, we review graph theory and differential privacy, and then we provide formal problem statements.

\subsection{Graph Theory}
We consider a simple, undirected, unweighted graph $G=(V,E)$ defined over a set of nodes $V=[n]$ with edge set $E \subset V \times V$. We say $(i,j) \in E$ if nodes $i$ and $j$ share an edge and $(i,j) \notin E$ if they do not.
Given $G=(V,E)$, we denote the edge set by $E(G)$ when there is a need to identify the underlying graph~$G$. 
We denote the complementary graph of $G=(V,E)$ as $\bar{G}=(V,\bar E)$, where~$\bar E=\{(i,j)\in V\times V\ |\ (i,j)\notin E\}$
is all edges that are not in~$G$. 
 
\subsection{Differential Privacy}
We use differential privacy~\cite{dwork2014algorithmic} to provide privacy for graphs. In this work, we consider the edge set of a graph to be the sensitive data that must be protected, and we implement \emph{edge differential privacy}~\cite{hay2009accurate}
to protect it. The goal of differential privacy is to 
randomize data in a way that
makes two ``similar" pieces of sensitive data produce outputs that are ``approximately indistinguishable". The notion of ``similarity" is formalized by an adjacency relation, which specifies 
which pairs of sensitive data must be rendered approximately indistinguishable by differential privacy. 
Next, we define a metric that describes the closeness of graphs, which we will use to define adjacency for edge sets.

\begin{definition}[\FN Metric]\label{def:FN}
    For two finite sets $S_1$ and $S_2$, the \FN metric with respect to the counting measure is
        $\uti(S_1, S_2)=|S_1 \Delta S_2|$.
\end{definition}

In words, $d_{\Delta}(S_1, S_2)$ counts the total number of elements
that are in either~$S_1$ or~$S_2$ but not in both of them. 
With an abuse of terminology, we refer to $\uti$ as \emph{the} \FN distance. We define adjacency for graphs
using Definition~\ref{def:FN}. 

\begin{definition}[{Edge adjacency; \cite{hay2009accurate}}]\label{def:adj}
    Fix an adjacency parameter~$A\in\mathbb{N}.$ Two graphs $G=(V,E)$ and $G' =(V,E')$ are adjacent if
    \begin{equation}
    \uti(E,E')\leq A.
    \end{equation}
    We write~$\Adj(G,G') = 1$ if~$G$ and~$G'$ are adjacent and~$\Adj(G, G')=0$ otherwise.
\end{definition}

Definition~\ref{def:adj} says that two graphs~$G$ and~$G'$ are adjacent if they 
have the same number of nodes and 
differ in at most~$A$ edges. An example of adjacent graphs is shown in Figure~\ref{fig:adj}.
The notion of ``approximately indistinguishable'' is made precise by the definition of differential privacy itself.

\begin{figure}[!t]
    \centering

\definecolor{pastelBlue}{RGB}{166,206,227}
\definecolor{pastelGreen}{RGB}{178,223,138}
\definecolor{pastelPink}{RGB}{251,180,174}
\definecolor{pastelPurple}{RGB}{202,178,214}

\begin{tikzpicture}[
    scale=0.6,
    node/.style={circle, draw=black!60, line width=0.5pt,
                 inner sep=0pt, minimum size=6.5mm, font=\footnotesize},
    edge/.style={- , draw=black!70, very thick}
]

\def\dr{1.4cm} 

\node[node, fill=pastelBlue]   (a1) at (0,\dr)    {};
\node[node, fill=pastelPink]   (a2) at (\dr,0)    {};
\node[node, fill=pastelGreen]  (a3) at (0,-\dr)   {};
\node[node, fill=pastelPurple] (a4) at (-\dr,0)   {};

\draw[edge] (a1)--(a2);
\draw[edge] (a2)--(a3);
\draw[edge] (a3)--(a4);
\draw[edge] (a4)--(a1);
\draw[edge] (a1)--(a3); 
\draw[edge] (a2)--(a4); 

\node at (0,-2.3) {Original Graph};

\begin{scope}[xshift=5cm]
\node[node, fill=pastelBlue]   (b1) at (0,\dr)    {};
\node[node, fill=pastelPink]   (b2) at (\dr,0)    {};
\node[node, fill=pastelGreen]  (b3) at (0,-\dr)   {};
\node[node, fill=pastelPurple] (b4) at (-\dr,0)   {};

\draw[edge] (b1)--(b2);
\draw[edge] (b2)--(b3);
\draw[edge] (b3)--(b4);
\draw[edge] (b4)--(b1);

\node at (0,-2.3) {Adjacent graph};
\end{scope}

\begin{scope}[xshift=10cm]
\node[node, fill=pastelBlue]   (c1) at (0,\dr)    {};
\node[node, fill=pastelPink]   (c2) at (\dr,0)    {};
\node[node, fill=pastelGreen]  (c3) at (0,-\dr)   {};
\node[node, fill=pastelPurple] (c4) at (-\dr,0)   {};

\draw[edge] (c3)--(c2); 
\draw[edge] (c3)--(c4); 
\draw[edge] (c4)--(c1); 

\node at (0,-2.3) {Non-adjacent graph};
\end{scope}

\end{tikzpicture}
    \caption{An example of an adjacent and non-adjacent graph for $A=2$.
    The middle graph is adjacent to the left graph because it differs by~$2$ edges. The right graph is not adjacent to the left graph because
    it differs by~$3$ edges.} 
    \label{fig:adj}
\end{figure}
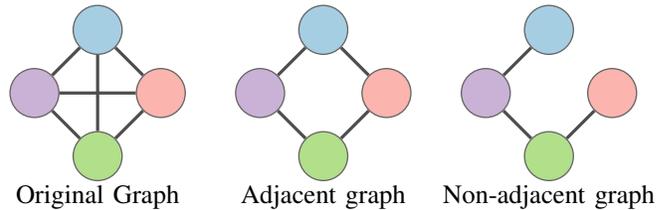

\begin{definition}[{Differential privacy; \cite{dwork2014algorithmic}}]\label{def:dp}
    Fix a probability space~$(\Omega, \mathcal{F}, \mathbb{P})$ and the set of all graphs with~$n$ nodes~$\G_n$. Fix a privacy parameter $\varepsilon \geq 0$. A randomized algorithm $\M:\G_n\times \Omega\to \G_n$ is $\varepsilon$-differentially private if for all $\mathcal{S} \subseteq \G_n$ and for all $G, G' \in \G_n$ such that $\Adj(G,G')=1$ we have
    \begin{equation}
        \mathbb{P}[\mathcal{M}(G) \in \mathcal{S}] \leq e^{\varepsilon} \cdot \mathbb{P}[\mathcal{M}(G')\in \mathcal{S}].
    \end{equation}
\end{definition}

Definition \ref{def:dp} says that 
if~$G$ and~$G'$ are adjacent, then
the probability of making an observation of the private form of $G$, namely $\M(G)$, must be close to the probability of making the same observation of the private form of $G'$, namely $\M(G')$. The parameter~$\varepsilon$ controls the strength of privacy, and smaller
values make privacy stronger. Typical values~\cite{hsu2014differential} are~$0.1$ to~$10$. 

A feature of differential privacy is that its protections weaken gracefully when multiple mechanisms are applied to the same piece of sensitive data. Repeated queries of sensitive data lead to a decreasing strength of privacy, which is formalized as follows.

\begin{lemma}[{Sequential composition; \cite[Section 3.5]{dwork2014algorithmic}}]\label{lemma:comp}
    Let $\M_i$ be an $\varepsilon_i$-differentially private mechanism for $i\in[k]$. If $\M_{[k]}$ is defined to be $\M_{[k]}=(\M_1, \M_2, \cdots, \M_k)$, then $\M_{[k]}$ is~$\sum_{i=1}^{k} \varepsilon_i$-differentially private.
\end{lemma}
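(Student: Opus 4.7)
The plan is to prove the composition bound pointwise on singleton outputs and then aggregate to arbitrary sets, relying on the fact that the mechanisms $\M_1,\ldots,\M_k$ are independently instantiated. Because $\G_n$ is finite, the joint output space $\G_n^k$ is also finite, and any $\mathcal{S}\subseteq\G_n^k$ can be written as the disjoint union of singletons $\{(g_1,\ldots,g_k)\}$, so I expect to never need measure-theoretic machinery beyond a finite sum.

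First, I would fix adjacent graphs $G,G'\in\G_n$ with $\Adj(G,G')=1$ and an arbitrary subset $\mathcal{S}\subseteq\G_n^k$. For each tuple $(g_1,\ldots,g_k)\in\mathcal{S}$, independence of the randomness driving $\M_1,\ldots,\M_k$ lets me factor
\begin{equation}
\Prob[\M_{[k]}(G)=(g_1,\ldots,g_k)] = \prod_{i=1}^{k}\Prob[\M_i(G)=g_i].
\end{equation}
Next, applying the $\varepsilon_i$-differential privacy of each $\M_i$ to the singleton $\{g_i\}$ gives $\Prob[\M_i(G)=g_i]\leq e^{\varepsilon_i}\Prob[\M_i(G')=g_i]$. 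Multiplying these $k$ inequalities and re-factoring yields
\begin{equation}
\Prob[\M_{[k]}(G)=(g_1,\ldots,g_k)] \leq e^{\sum_{i=1}^{k}\varepsilon_i}\,\Prob[\M_{[k]}(G')=(g_1,\ldots,g_k)].
\end{equation}

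Finally, summing both sides over all tuples $(g_1,\ldots,g_k)\in\mathcal{S}$ preserves the inequality and collapses each side into $\Prob[\M_{[k]}(G)\in\mathcal{S}]$ and $e^{\sum_i\varepsilon_i}\Prob[\M_{[k]}(G')\in\mathcal{S}]$, respectively, which by Definition~\ref{def:dp} is exactly the statement that $\M_{[k]}$ is $\sum_{i=1}^{k}\varepsilon_i$-differentially private.

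The only real subtlety, and hence the main thing to flag rather than a genuine obstacle, is justifying the factorization step: it depends on the convention that $\M_{[k]}$ composes the mechanisms with independent randomness rather than adaptively reusing earlier outputs. Because the statement of the lemma defines $\M_{[k]}$ simply as the tuple $(\M_1,\ldots,\M_k)$ without any adaptive coupling, independence is built in and the factorization is immediate; an induction on $k$ would give the same result and would be the natural fallback if one wished to handle an adaptive variant.
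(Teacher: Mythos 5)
The paper does not prove this lemma; it is imported verbatim from the cited reference \cite[Section 3.5]{dwork2014algorithmic}, so there is no in-paper proof to compare against. Your argument is the standard one and is correct for the lemma as stated: since $\G_n$ is finite, Definition~\ref{def:dp} applied to singletons gives the pointwise ratio bound $\Prob[\M_i(G)=g_i]\leq e^{\varepsilon_i}\Prob[\M_i(G')=g_i]$, the product over $i$ telescopes into the factor $e^{\sum_i \varepsilon_i}$, and summing over a finite $\mathcal{S}\subseteq\G_n^k$ preserves the inequality. Two small points. First, Definition~\ref{def:dp} as written only covers mechanisms with outputs in $\G_n$, so you are implicitly extending it to output space $\G_n^k$; that extension is the intended reading but is worth stating. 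Second, you are right that the factorization is the only load-bearing assumption: the reference actually proves the stronger \emph{adaptive} version, in which $\M_i$ may depend on the realized outputs of $\M_1,\dots,\M_{i-1}$, by bounding the ratio of conditional probabilities term by term in the chain-rule factorization $\Prob[\M_{[k]}(G)=(g_1,\dots,g_k)]=\prod_{i}\Prob[\M_i(G)=g_i\mid g_1,\dots,g_{i-1}]$ rather than assuming independence. Your induction fallback recovers exactly that argument, and since the paper only ever invokes the lemma for $n-1$ non-adaptive queries of the same graph (the bounded Laplace baseline in Section~\ref{sec:IV}), the independent version you prove suffices for every use in the paper.
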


Recalling that smaller values of~$\varepsilon$ imply stronger privacy, Lemma~\ref{lemma:comp} says that privacy weakens if multiple
mechanisms are used to generate private outputs from the same piece of sensitive data, but it weakens only linearly in the number of mechanisms. 
 Another appealing property of differential privacy is that it is immune to post-processing, i.e., arbitrary post-hoc computations on differentially private data do not weaken privacy's guarantees. This property is formalized by the following lemma.

\begin{lemma}[{Immunity to Post-Processing; \cite[Section 2.3]{dwork2014algorithmic}}]\label{lemma:immunity}
    Let~$\M:\mathbb{R}^n \times \Omega \rightarrow \mathbb{R}^m$ be an~$\varepsilon$-differentially private mechanism. Let~$h:\mathbb{R}^m \rightarrow \mathbb{R}^p$ be an arbitrary mapping. Then the composition~$h \circ \M:\mathbb{R}^n \to \mathbb{R}^p$ is $\varepsilon$-differentially private.
\end{lemma}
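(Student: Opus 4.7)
The plan is to reduce any event about $h(\M(x))$ to an event about $\M(x)$ via the preimage of $h$, and then invoke the $\varepsilon$-differential privacy of $\M$ directly. The intuition is that since $h$ acts only on the output of $\M$ and never touches the sensitive input, any distinguishing power between adjacent inputs~$x$ and~$x'$ that can be extracted from $h \circ \M$ was already available from $\M$ itself, so the privacy loss cannot grow.

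Concretely, I would fix adjacent inputs $x, x' \in \mathbb{R}^n$ and an arbitrary measurable set $T \subseteq \mathbb{R}^p$, and then define the preimage $S = h^{-1}(T) = \{y \in \mathbb{R}^m : h(y) \in T\} \subseteq \mathbb{R}^m$. The key set-theoretic identity is $\{h(\M(x)) \in T\} = \{\M(x) \in S\}$, which immediately gives
\begin{equation}
    \mathbb{P}[h(\M(x)) \in T] = \mathbb{P}[\M(x) \in S],
\end{equation}
and likewise for $x'$. Applying the $\varepsilon$-differential privacy of $\M$ to the set $S$, together with the adjacency of $x$ and $x'$, yields $\mathbb{P}[\M(x) \in S] \leq e^{\varepsilon}\, \mathbb{P}[\M(x') \in S]$, which chains back to $\mathbb{P}[h(\M(x)) \in T] \leq e^{\varepsilon}\, \mathbb{P}[h(\M(x')) \in T]$, matching Definition~\ref{def:dp} for the composition $h \circ \M$. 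Since $T$ was arbitrary, this completes the argument.

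The main obstacle is purely technical: it must be verified that $S = h^{-1}(T)$ is a valid event to which the DP guarantee can be applied, which requires $h$ to be measurable so that $S$ lies in the appropriate $\sigma$-algebra on $\mathbb{R}^m$. A secondary wrinkle, should one wish to extend the result to a \emph{randomized} post-processing that draws auxiliary randomness $\omega'$ independent of the randomness driving $\M$, is that I would condition on $\omega'$, apply the deterministic argument above pointwise, and then integrate against the law of $\omega'$; independence is what makes the conditional step legitimate. Beyond these measurability considerations, the proof does not need to examine the internal structure of $h$ at all and rests solely on the preimage identity together with the privacy guarantee already established for $\M$.
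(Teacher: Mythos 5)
Your proof is correct and is essentially the canonical argument: the paper does not prove this lemma itself but cites it to Dwork and Roth, and their proof is exactly your preimage reduction $\mathbb{P}[h(\M(x)) \in T] = \mathbb{P}[\M(x) \in h^{-1}(T)]$ followed by an application of the privacy guarantee of $\M$ to the event $h^{-1}(T)$. Your remarks on measurability of $h$ and on handling randomized post-processing by conditioning on the auxiliary randomness are the right technical caveats and match the standard treatment, so there is nothing to add.
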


Lemma~\ref{lemma:immunity} implies that once we generate a private graph, we can compute and share any properties of it without weakening privacy's protections.

\subsection{Problem Statements}
The rest of the paper will solve the following problems. 

\begin{problem}\label{prob:1}
    Develop a mechanism to generate graphs that preserve $\varepsilon$-differential privacy in the sense of Definition~\ref{def:dp} for a sensitive graph~$G$.
\end{problem}

Problem~1 is only about privacy. It asks us to specify a randomized mechanism that maps a sensitive graph $G$ to a privatized 
graph $\widetilde G$ and enforces $\varepsilon$–differential privacy. 
Problem~2 then turns to accuracy. As stated in the introduction, our primary objective is to accurately privatize 
graphs, and in this work we assess that accuracy by examining
the Laplacian spectrum of a graph, which is commonly used in control 
and analysis of dynamical processes run over complex networks.

\begin{problem}\label{prob:3}
    Empirically compare the accuracy of the proposed mechanism to an existing benchmark mechanism on the Laplacian spectra of real-world social network data.
\end{problem}

 In short, Problem~\ref{prob:1} enforces differential privacy, and Problem~\ref{prob:3} measures how well the mechanism preserves
 the accuracy of Laplacian spectral properties.

\section{Differentially Private Graph Generation}\label{sec:III}
In this section, we design a mechanism for computing differentially private graphs from a sensitive graph, which solves Problem~\ref{prob:1}. 
Our approach is based on the exponential mechanism \cite{mcsherry2007mechanism}, which is 
commonly used to implement privacy for non-numerical data.
The core idea is to output private graphs whose edge sets are close to the edge set of a sensitive graph with high probability.
In Section~\ref{sec:utility}, we formalize this notion of closeness with a utility function, then in Section~\ref{sec:exp_mech} we use this utility function 
to develop the exponential mechanism we use.

\subsection{Utility and Global Sensitivity} \label{sec:utility}
For a given sensitive graph, we desire a privacy mechanism that outputs a private graph that is ``close" to the sensitive graph with high probability, while satisfying the definition of differential privacy. Since we consider edge differential privacy in this work, we consider graphs ``close'' when their edge sets are close, which we formalize with the following utility function. 

\begin{definition}[Utility function]\label{def:util}
    For a sensitive input graph~$G \in \G_n$, the private output graph~$H\in \G_n$ has utility
    \begin{equation}\label{eq:utilityy}
    u(G, H) = -\uti\big(E(G), E(H)\big).
    \end{equation}
\end{definition}

This choice of utility function captures the fact that a graph~$H$ is a better private
estimate of a sensitive graph~$G$ when the edge set of~$H$ differs from the edge set of~$G$ by fewer edges. 
The maximum value of the utility is~$0$, which occurs when~$H = G$, and all other choices
of private outputs give negative values of the utility function. 
The minimum value of the utility is~$-\binom{n}{2}$,
which occurs when~$E(H) = \bar{E}(G)$, i.e., when~$H$ is the complementary graph
to~$G$, which causes~$G$ and~$H$ to differ in all edges.

Implementing the exponential mechanism requires bounding the amount 
by which the utility can differ over adjacent graphs. This is referred to as the utility's \emph{global sensitivity}. 

\begin{definition}[{Global sensitivity; \cite[Section 3.4]{dwork2014algorithmic}}]\label{def:sensitivity}
    Fix~$A \in \mathbb{N}$ and consider the adjacency relation~$\textnormal{Adj}$ in Definition~\ref{def:adj}. 
    Then the global sensitivity of the utility function~$u: \G_n \times \G_n \to \mathbb{Z}$ from Definition~\ref{def:util} is 
    \begin{equation}
        \Delta u = \max_{H \in \G_n} \max_{\substack{G_1, G_2 \in \G_n \\ \textnormal{Adj}(G_1, G_2) = 1}} \big|u(G_1, H)-u(G_2, H)\big|. 
    \end{equation}
\end{definition}

The exponential mechanism is formulated in terms of this global sensitivity, which we bound
as follows. 

\begin{lemma} \label{lem:sensitivity}
    Fix an adjacency parameter~$A\in \mathbb{N}$. Then the global sensitivity from Definition~\ref{def:sensitivity} obeys~$\Delta u \leq A$. 
\end{lemma}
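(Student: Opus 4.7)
The plan is to reduce the sensitivity bound to the (reverse) triangle inequality for the Fréchet-Nikodym distance. For any fixed output graph $H\in\G_n$ and any adjacent pair $G_1, G_2\in\G_n$, the definition of the utility in~\eqref{eq:utilityy} gives
\begin{equation}
|u(G_1, H)-u(G_2, H)| = \bigl|\uti(E(G_1), E(H)) - \uti(E(G_2), E(H))\bigr|,
\end{equation}
so the whole lemma comes down to bounding the right-hand side by $A$.

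First, I would verify (or simply invoke) that $\uti$ satisfies the triangle inequality on finite subsets of $V\times V$. This is a standard fact that can be established in one line from the set-theoretic inclusion $S_1 \Delta S_3 \subseteq (S_1 \Delta S_2) \cup (S_2 \Delta S_3)$, which, after taking cardinalities, yields $\uti(S_1, S_3) \leq \uti(S_1, S_2) + \uti(S_2, S_3)$. Applying this in both directions to the triple $(E(G_1), E(G_2), E(H))$ yields the reverse triangle inequality
\begin{equation}
\bigl|\uti(E(G_1), E(H)) - \uti(E(G_2), E(H))\bigr| \leq \uti(E(G_1), E(G_2)).
\end{equation}

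Second, I would invoke the adjacency hypothesis: by Definition~\ref{def:adj}, $\Adj(G_1,G_2)=1$ means $\uti(E(G_1), E(G_2)) \leq A$. Chaining these inequalities gives $|u(G_1,H)-u(G_2,H)| \leq A$ for every such triple. Taking the maximum over all $H\in\G_n$ and all adjacent pairs $G_1,G_2\in\G_n$ then yields $\Delta u \leq A$, as required.

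I do not anticipate a serious obstacle here: the argument is essentially the observation that the utility is a $1$-Lipschitz function of the edge set in the $\uti$ metric, composed with the adjacency bound. The only step that deserves an explicit justification is the triangle inequality for $\uti$, and even that is a one-line consequence of the symmetric-difference inclusion above. Hence the whole proof should fit in a few lines.
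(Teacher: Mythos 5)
Your proposal is correct and follows essentially the same route as the paper's proof: both reduce the bound to the (reverse) triangle inequality for $\uti$ applied to the triple $\big(E(G_1), E(G_2), E(H)\big)$ and then invoke the adjacency bound $\uti(E(G_1),E(G_2))\leq A$. The only cosmetic differences are that you handle the absolute value via the two-sided reverse triangle inequality where the paper splits into cases, and you explicitly justify the triangle inequality for $\uti$ via the inclusion $S_1\Delta S_3\subseteq (S_1\Delta S_2)\cup(S_2\Delta S_3)$, which the paper takes for granted.
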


\begin{proof}
    First, suppose~$u(G_1,H)\geq u(G_2,H)$. 
    From Definition~\ref{def:sensitivity} we have 
    \begin{equation} \label{eq:lem3_step1}
        \Delta u = \max_{H \in \G_n} \  \max_{\substack{G_1, G_2 \in \G_n \\ \textnormal{Adj}(G_1, G_2) = 1}} u(G_1,H)-u(G_2,H), 
    \end{equation}
    and substituting the definition of~$u$ from~\eqref{eq:utilityy} gives
    \begin{multline}
        u(G_1, H)-u(G_2,H) = \\
        \uti\big(E(G_2),E(H)\big) - \uti\big(E(G_1),E(H)\big).
        \label{eq:lem3_step2}
    \end{multline}
    Using the triangle inequality in~\eqref{eq:lem3_step2} gives
    \begin{multline}
        \uti\big(E(G_2),E(H)\big) - \uti\big(E(G_1),E(H)\big) \\ \leq d_\Delta\big(E(G_1),E(G_2)\big) = -u(G_1,G_2).
        \label{eq:lem3_step3}
    \end{multline}
    Then plugging~\eqref{eq:lem3_step2} and~\eqref{eq:lem3_step3} into~\eqref{eq:lem3_step1} gives
    \begin{equation} \label{eq:thinginlemma3}
        \Delta u \leq \max_{\substack{G_1, G_2 \in \G_n \\ \textnormal{Adj}(G_1, G_2) = 1}} -u(G_1,G_2) = A.
    \end{equation}
    Identical steps show~\eqref{eq:thinginlemma3} also holds when $u(G_2,H) \geq u(G_1,H)$.
\end{proof}

With a global sensitivity bound in place, we next develop the exponential mechanism for graphs.

\subsection{Graph Exponential Mechanism}
\label{sec:exp_mech}
The exponential mechanism formalizes the idea of generating private outputs with high utility.
We state the exponential mechanism in its standard form 
for a sensitive graph $G \in \G_n$.

\begin{definition}[{Exponential mechanism;~\cite[Chapter 3.4]{dwork2014algorithmic}}]
    Fix a privacy parameter~$\varepsilon\geq0$ and an adjacency parameter~$A \in \mathbb{N}$. 
    The exponential mechanism selects and outputs a graph~$H\in \G_n$ with probability proportional to~$\exp\big(\frac{\varepsilon u(G,H)}{2\Delta u}\big)$, where~$u$ is from Definition~\ref{def:util} and~$\Delta u$ is from Definition~\ref{def:sensitivity}.
    \label{def:exponential_mechanism}
\end{definition}

It is well-established that the exponential mechanism satisfies $\varepsilon$-differential privacy~\cite[Chapter 3.4]{dwork2014algorithmic}.
As stated, Definition~\ref{def:exponential_mechanism} requires the computation of a normalization constant
to compute probabilities of generating specific outputs. The conventional analysis
of the exponential mechanism~\cite[Chapter 3.4]{dwork2014algorithmic} 
requires 
\begin{equation} \label{eq:epsover2bound}
\frac{\exp\left(\frac{\varepsilon u(G,H)}{2\Delta u}\right)}{\exp\left(\frac{\varepsilon u(G',H)}{2\Delta u}\right)} \leq \exp\left(\frac{\varepsilon}{2}\right)
\end{equation}
for adjacent
graphs~$G$ and~$G'$. 

The reason for having~$\frac{\varepsilon}{2}$ on the right-hand side of~\eqref{eq:epsover2bound} 
is that the normalization
constant is also allowed to differ by up to a multiplicative factor of~$\frac{\varepsilon}{2}$
in the same way. 
For a graph~$G$, we can define the normalization constant
\begin{equation} \label{eq:norm_term}
C_{G} = \sum_{H\in\mathcal{G}_n}\exp\left(\frac{\varepsilon u(G,H)}{2\Delta u}\right),
\end{equation}
and for an adjacent graph~$G'$ we can define
\begin{equation} 
C_{G'} = \sum_{H\in\mathcal{G}_n}\exp\left(\frac{\varepsilon u(G',H)}{2\Delta u}\right).
\end{equation}
Then the conventional analysis of the exponential mechanism 
reserves half of the privacy budget for changes in the proportionality constant
by allowing
\begin{equation} \label{eq:norm_term2}
\frac{C_{G'}}{C_{G}} \leq \exp\left(\frac{\varepsilon}{2}\right). 
\end{equation}
Then when the input graph is~$G$ one would sample~$H \sim \frac{1}{C_G}\exp\left(\frac{\varepsilon u(G,H)}{2\Delta u}\right)$, 
and when the input graph is~$G'$ one would sample~$H \sim \frac{1}{C_{G'}}\exp\left(\frac{\varepsilon u(G',H)}{2\Delta u}\right)$. 
One easily shows that the exponential mechanism is~$\varepsilon$-differentially private
because
\begin{equation}
\frac{C_{G'}\exp\left(\frac{\varepsilon u(G,H)}{2\Delta u}\right)}{C_{G}\exp\left(\frac{\varepsilon u(G',H)}{2\Delta u}\right)} \leq \exp\left(\frac{\varepsilon}{2}\right)\exp\left(\frac{\varepsilon}{2}\right) = \exp(\varepsilon). 
\end{equation}
The bounds in terms of~$\frac{\varepsilon}{2}$ in~\eqref{eq:epsover2bound}
and~\eqref{eq:norm_term2} are why~$\frac{\varepsilon}{2}$
appears in the definition of the exponential mechanism in
Definition~\ref{def:exponential_mechanism}.

The change in the normalization constant 
can occur when the exponential mechanism is applied in a context
in which that constant depends on~$G$. 
If the normalization constant does not depend on~$G$,
then the ratio in~\eqref{eq:norm_term2} would hold with upper
bound~$\exp\left(\frac{0}{2}\right) = 1$.
In that case, one does not need to reserve half of the privacy
budget for changes in the normalization constant,
and, instead, one can use the entire privacy budget for the exponential term
in Definition~\ref{def:exponential_mechanism}. Doing so would
mean that an output~$H$ would be generated with probability proportional
to~$\exp\big(\frac{\varepsilon u(G,H)}{\Delta u}\big)$
instead of~$\exp\big(\frac{\varepsilon u(G,H)}{2\Delta u}\big)$.
In that case, the normalization constant would be~$1$ divided by a sum
of terms of the form~$\exp\big(\frac{\varepsilon u(G,H)}{\Delta u}\big)$. 

We next show that, over the space of private output graphs,
the normalization constant 
\begin{equation}\label{eq:og_c}
C=\sum_{H\in\mathcal{G}_n}\exp\left(\frac{\varepsilon u(G,H)}{\Delta u}\right)
\end{equation}
is indeed
independent of~$G$.

\begin{lemma}\label{lem:norm_const}
Fix an adjacency parameter~$A \in \mathbb{N}$
and a privacy parameter~$\varepsilon \geq 0$.
Consider~$C$ in~\eqref{eq:og_c}. 
For any sensitive graph~$G\in\mathcal{G}_n$, we have 
\begin{equation}
\begin{aligned}\label{eq:big_C}
    C &= \biggl(1+\exp\left(-\frac{\varepsilon}{A}\right) \biggr)^{\binom{n}{2}}.
\end{aligned}
\end{equation}
\end{lemma}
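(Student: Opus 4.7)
The plan is to exploit the product structure of $\mathcal{G}_n$, viewed as the power set of the set of all $\binom{n}{2}$ potential edges on $[n]$, to factor $C$ into a product of per-edge sums whose value is manifestly independent of $G$. First I would substitute $u(G,H)=-d_\Delta(E(G),E(H))$ from Definition~\ref{def:util} and the sensitivity bound $\Delta u = A$ from Lemma~\ref{lem:sensitivity} into~\eqref{eq:og_c}, so that each summand becomes $\exp\!\bigl(-\tfrac{\varepsilon}{A}\,|E(G)\Delta E(H)|\bigr)$.

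Next I would reparameterize the summation by the ``flip pattern'' relative to $G$. Let $\mathcal{E}_n$ denote the set of $\binom{n}{2}$ potential edges on vertex set $[n]$. Every $H \in \mathcal{G}_n$ is uniquely determined by the vector $(x_e)_{e \in \mathcal{E}_n} \in \{0,1\}^{\mathcal{E}_n}$ with $x_e = \mathbf{1}\{e \in E(G)\Delta E(H)\}$, and this correspondence is a bijection onto $\{0,1\}^{\mathcal{E}_n}$. The crucial identity is $|E(G)\Delta E(H)| = \sum_{e \in \mathcal{E}_n} x_e$, which converts the exponential of a sum into a product.

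Applying the finite distributive law then yields
\begin{equation}
C \;=\; \sum_{(x_e) \in \{0,1\}^{\mathcal{E}_n}} \prod_{e \in \mathcal{E}_n} \exp\!\left(-\frac{\varepsilon}{A} x_e\right) \;=\; \prod_{e \in \mathcal{E}_n} \sum_{x_e \in \{0,1\}} \exp\!\left(-\frac{\varepsilon}{A} x_e\right),
\end{equation}
and each inner sum equals $1+\exp(-\varepsilon/A)$. Since the product contains $\binom{n}{2}$ identical factors, the claim~\eqref{eq:big_C} follows at once, and crucially the right-hand side has no residual dependence on $E(G)$. The only step requiring any real thought is the bijective reparameterization---which is exactly what decouples $C$ from $G$---since the sum-product interchange is just the elementary distributive law for a finite product of finite sums, so I do not anticipate any serious technical obstacle.
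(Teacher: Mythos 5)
Your proof is correct, but it takes a genuinely different route from the paper. The paper's proof partitions $\mathcal{G}_n$ into the shells $\mathcal{X}(G,k)$ of graphs at \FN distance $k$ from $G$ as in~\eqref{eq:X}, counts $|\mathcal{X}(G,k)|=\binom{\binom{n}{2}}{k}$, and then evaluates $\sum_{k}\binom{\binom{n}{2}}{k}\exp(-k\varepsilon/A)$ via the binomial theorem. You instead reparameterize each $H$ by its flip pattern $(x_e)$ relative to $G$, use $|E(G)\,\Delta\,E(H)|=\sum_e x_e$ to turn the exponential of a sum into a product, and apply the distributive law to factor $C$ into $\binom{n}{2}$ identical per-edge sums. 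The two arguments are two sides of the same combinatorial identity (expanding your product recovers the paper's binomial sum), so neither is more general, but yours has a concrete structural payoff: it exhibits the exponential mechanism's output distribution as a product measure over independent per-edge Bernoulli choices, which is exactly the fact that Mechanism~\ref{mech:2} and Theorem~\ref{thm:equiv} later establish by a separate calculation. The paper's shell decomposition, on the other hand, is reused directly in the proof of Theorem~\ref{thm:equiv}, so it earns its keep there. One cosmetic point: you invoke $\Delta u = A$ while Lemma~\ref{lem:sensitivity} formally only gives $\Delta u \leq A$; the paper makes the same substitution in~\eqref{eq:mid_c}, and equality does hold whenever $A \leq \binom{n}{2}$, so this is consistent with the paper rather than a gap, but it is worth a one-line remark.
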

\begin{proof}
    The utility function $u$ depends on $G$ through the distance~$\uti(E(G), E(H))$. 
Therefore, to compute $C,$ we group all candidate private outputs in $\mathcal{G}_n$ according to their distance from $G$.
Let
\begin{equation} \label{eq:X}
\mathcal{X}(G,k)=\{H\in \G_n \ | \ \uti(E(G),E(H))=k \}
\end{equation}
be the set of candidate output graphs whose edge sets have 
\FN distance~$k$ from the edge set of~$G$.
By construction, $k\in \{0, 1, \dots, \binom{n}{2}\}$ and~$\G_n = \bigcup_{k=0}^{\binom{n}{2}}\mathcal{X}(G,k)$ for any $G\in\mathcal{G}_n$. Therefore, we express~\eqref{eq:og_c} as
\begin{equation} \label{eq:CwithX}
\begin{aligned}
    C &= \sum_{k=0}^{\binom{n}{2}} \sum_{H\in \mathcal{X}(G,k)} \exp\left(\frac{\varepsilon u(G,H)}{\Delta u}\right). \\ 
\end{aligned}
\end{equation}
For a given value of~$k$, we have $|\mathcal{X}(G,k)|=\binom{\binom{n}{2}}{k},$ and any graph $H\in\mathcal{X}(G,k)$ satisfies $u(G,H)=-k$. 
Then from~\eqref{eq:CwithX} 
\begin{equation}
\begin{aligned}
    C &= \sum_{k=0}^{\binom{n}{2}} {\binom{\binom{n}{2}}{k}} \exp\left(-\frac{k\varepsilon}{A}\right).\label{eq:mid_c}
\end{aligned}
\end{equation}
Finally, applying the binomial theorem to~\eqref{eq:mid_c} gives $$
    C = \biggl(1+\exp\left(-\frac{\varepsilon}{A}\right) 
    \biggr)^{\binom{n}{2}}.$$ 
\end{proof}

The normalization constant~$C$ in~\eqref{eq:big_C} has no dependence on $G$ because 
it is fully determined by the number of nodes in the network~$n$, the privacy parameter~$\varepsilon$, and the adjacency parameter~$A$. 
We use this normalization constant to formalize the graph exponential mechanism and state the probability of outputting any candidate graph~$H\in\mathcal{G}_n$, with no privacy budget reserved for the normalization constant. 

\begin{mechanism}[Graph Exponential Mechanism] \label{mech:em}
Fix a sensitive graph~$G \in \G_n$, an adjacency parameter~$A \in \mathbb{N}$, 
and a privacy parameter~$\varepsilon \geq 0$. 
 Then the \emph{graph exponential mechanism} $\M_{\G}$ selects and outputs a graph $H\in \G_n$ 
 with probability 
\begin{equation} \label{formula1}
    \mathbb{P}\big[\M_{\G}(G) = H\big]
    = \frac{\exp(\frac{\varepsilon u(G,H)}{A})}{\biggl(1+\exp(-\frac{\varepsilon}{A}) \biggr)^{\binom{n}{2}}}. 
\end{equation}
\end{mechanism}

The following theorem establishes that Mechanism~\ref{mech:em} is differentially private. 

\begin{theorem}\label{thm:mech1_dp}
    Fix a sensitive graph~$G \in \G_n$, an adjacency parameter~$A \in \mathbb{N}$, 
    and a privacy parameter~$\varepsilon \geq 0$. 
    Then Mechanism~\ref{mech:em} is $\varepsilon$-differentially private
    with respect to the adjacency relation in Definition~\ref{def:adj}.     
\end{theorem}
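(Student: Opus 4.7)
The plan is to directly verify the ratio condition in Definition~\ref{def:dp} using the closed form of the output distribution from Mechanism~\ref{mech:em} together with the key fact, established in Lemma~\ref{lem:norm_const}, that the normalization constant $C$ does not depend on the input graph. Because of this, the proof will not need to reserve half of the privacy budget for the normalization term, and the full factor~$\varepsilon/\Delta u$ (rather than $\varepsilon/(2\Delta u)$) in the exponent will be admissible.

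First, I would fix any two adjacent graphs $G, G' \in \mathcal{G}_n$ with $\mathrm{Adj}(G,G') = 1$ and any single candidate output $H \in \mathcal{G}_n$, and form the pointwise ratio
\begin{equation}
\frac{\mathbb{P}[\mathcal{M}_\mathcal{G}(G) = H]}{\mathbb{P}[\mathcal{M}_\mathcal{G}(G') = H]}
= \frac{C^{-1}\exp\!\bigl(\tfrac{\varepsilon u(G,H)}{A}\bigr)}{C^{-1}\exp\!\bigl(\tfrac{\varepsilon u(G',H)}{A}\bigr)}
= \exp\!\left(\frac{\varepsilon\,\bigl(u(G,H)-u(G',H)\bigr)}{A}\right),
\end{equation}
where the normalization constants cancel by Lemma~\ref{lem:norm_const}. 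Next, I would invoke Lemma~\ref{lem:sensitivity}, which gives $\Delta u \leq A$, so that $|u(G,H) - u(G',H)| \leq A$. Substituting this bound yields the pointwise inequality $\mathbb{P}[\mathcal{M}_\mathcal{G}(G) = H] \leq e^{\varepsilon}\,\mathbb{P}[\mathcal{M}_\mathcal{G}(G') = H]$.

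Finally, to lift this pointwise bound to arbitrary measurable sets $\mathcal{S} \subseteq \mathcal{G}_n$, I would sum the inequality over all $H \in \mathcal{S}$, which is legitimate because $\mathcal{G}_n$ is finite and $\mathcal{M}_\mathcal{G}$ induces a discrete distribution; this produces $\mathbb{P}[\mathcal{M}_\mathcal{G}(G) \in \mathcal{S}] \leq e^{\varepsilon}\,\mathbb{P}[\mathcal{M}_\mathcal{G}(G') \in \mathcal{S}]$, which is exactly Definition~\ref{def:dp}. There is no real obstacle here: the entire content of the argument has already been isolated into the two preceding lemmas (sensitivity of $u$ and independence of $C$ from $G$), so the theorem is essentially a bookkeeping step that assembles them. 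The only subtlety worth flagging explicitly is to note why the $2\Delta u$ of the textbook exponential mechanism can be sharpened to $\Delta u$ here, namely because the normalization constant contributes no privacy loss.
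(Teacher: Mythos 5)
Your proposal is correct and follows essentially the same route as the paper: both compute the pointwise ratio $\mathbb{P}[\mathcal{M}_\mathcal{G}(G)=H]/\mathbb{P}[\mathcal{M}_\mathcal{G}(G')=H]$, observe that the $G$-independent normalization constants cancel, and bound the resulting exponent by $\varepsilon$ via Lemma~\ref{lem:sensitivity}. Your explicit summation over $H\in\mathcal{S}$ to pass from the pointwise bound to the event-level bound of Definition~\ref{def:dp} is a small bookkeeping step the paper leaves implicit, but it is not a different argument.
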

\begin{proof}
    Let $G, G'\in \G_n$ be adjacent graphs.
    For a candidate output graph~$H \in \G_n$, we analyze the ratio of the probabilities that the exponential mechanism outputs~$H$ given the inputs~$G$ and~$G'$. 
    Then
    \begin{align}
            \frac{\Prob[\M_\G (G)=H]}{\Prob[\M_\G(G')=H]}&=\frac{\exp \bigl( \frac{\varepsilon u(G,H)}{A}\bigr) / {\bigl(1+\exp(-\frac{\varepsilon}{A}) \bigr)^{\binom{n}{2}}}} {\exp \bigl( \frac{\varepsilon u(G',H)}{A}\bigr) / {\bigl(1+\exp(-\frac{\varepsilon}{A}) \bigr)^{\binom{n}{2}}}} \\
            &=\exp\biggl(\frac{\varepsilon \bigl(u(G,H)-u(G',H) \bigr)}{A}\biggr)\\
            &\leq \exp\biggl(\frac{\varepsilon \big|u(G,H)-u(G',H) \big|}{A}\biggr)\label{eq:thm1_almost}  \\
            &\leq \exp(\varepsilon) \label{eq:thm1_final},
    \end{align}
    where
    \eqref{eq:thm1_final} follows from applying Lemma~\ref{lem:sensitivity}.
\end{proof}


Although Mechanism~\ref{mech:em} has an explicit form, it may not be possible to implement it directly.
It is known that
if the set of private outputs is large,
then the exponential mechanism may be difficult to efficiently implement as written~\cite[Section 3.4]{dwork2014algorithmic}. 
To naively
implement Mechanism~\ref{mech:em}, one would assign probabilities to all $2^{\binom{n}{2}}$ graphs in $\mathcal{G}_n$,
which becomes intractable for even modest values of~$n$. 
Therefore, we next find an implementable approach for the graph exponential mechanism.

\ActivateWarningFilters[sectiontitles]
\section{Efficient Implementation Using \\ a Modified E-R Model} \label{sec:eff_mech}
\DeactivateWarningFilters[sectiontitles]
In this section we construct an efficient implementation of Mechanism~\ref{mech:em} using a modified E-R model.
For a fixed node set, the conventional E-R model~\cite{erdds1959random}
generates a random graph in which each edge 
appears independently and with the 
same probability~$p \in [0, 1]$. In this work we consider two types of edges. 
For an input graph $G \in \G_n$ and an output graph $H\in\G_n$, we want the edges in $G$ to appear in $H$ with high probability, while the edges not in $G$ should 
appear in~$H$ with low probability. 
Mathematically, we do this by using a modified E-R model with two edge probabilities of the form 
\begin{equation} \label{eq:P_eH1}
\Prob[e\in E(H)]=\begin{cases}
\begin{aligned}
    p \quad &e\in E(G)\\
    1-p \quad &e\notin E(G)\\
\end{aligned}
\end{cases}, 
\end{equation}
where $p \geq \frac{1}{2}$ to ensure that~$p \geq 1 - p$ and hence that edges
in~$G$ appear in~$H$ with a higher probability than the edges that do not appear in~$G$.
The following mechanism gives an explicit expression for $p$ in terms of the privacy parameter $\varepsilon$ and the adjacency parameter~$A$.

\begin{mechanism}[Modified E-R Model] \label{mech:2}
Fix a sensitive graph~$G \in \G_n$, an adjacency parameter~$A \in \mathbb{N}$, 
and a privacy parameter~$\varepsilon \geq 0.$
Consider a candidate output graph $H\in \G_n.$
The edge set of $H$ is constructed according to
\begin{equation} \label{eq:P_eH2}
\Prob[e\in E(H)]=\begin{cases} 
\begin{aligned}
     \frac{1}{1+\exp(-\frac{\varepsilon}{A})} \quad &e\in E(G)\\
    \frac{\exp(-\frac{\varepsilon}{A})}{1+\exp(-\frac{\varepsilon}{A})} \quad &e\notin E(G)\\
\end{aligned}
\end{cases}.
\end{equation}
\end{mechanism}

\begin{algorithm}[t]
\caption{Modified E-R Model (Mechanism~\ref{mech:2})}
\label{alg:mech2}
\begin{algorithmic}[1]
\STATEx \textbf{Input:} Sensitive graph $G=(V,E)$ with $|V|=n$, adjacency parameter $A\in\mathbb{N}$, privacy parameter $\varepsilon \geq 0$
\STATEx \textbf{Output:} Private graph $H$ on $V$
\STATE $p \gets \frac{1}{1+\exp(-\frac{\varepsilon}{A})} $
\STATE Initialize $H = \big(V, E(H)\big)$ with~$E(H) = \emptyset$
\FOR{$i=1$ \textbf{to} $n-1$}
  \FOR{$j=i+1$ \textbf{to} $n$}
    \IF{$(i,j)\in E(G)$}
      \STATE With probability $p$, add edge~$(i, j)$ to~$E(H)$
    \ELSE
      \STATE With probability $(1-p)$, add edge $(i,j)$ to $E(H)$
    \ENDIF
  \ENDFOR
\ENDFOR
\STATE \textbf{return} $H$
\end{algorithmic}
\end{algorithm}

The implementation of Mechanism~\ref{mech:2} is summarized in Algorithm~\ref{alg:mech2}.
We use~$\mathcal{M}_E(G)$ to denote a private graph generated from a sensitive graph $G$ using Mechanism~\ref{mech:2}. 
Next, we establish that Mechanism~\ref{mech:em} and Mechanism~\ref{mech:2} 
in fact define the same distribution over private graphs. 

\begin{theorem}\label{thm:equiv}
    Fix a sensitive graph~$G \in \G_n$. Then 
    Mechanism \ref{mech:em} and Mechanism \ref{mech:2} are equivalent for any privacy parameter $\varepsilon \geq 0$ and adjacency parameter $A \in \mathbb{N}$
    in the sense that they define the same distribution over~$\mathcal{G}_n$. 
\end{theorem}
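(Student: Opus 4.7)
The plan is to show that for any fixed candidate output graph $H \in \mathcal{G}_n$, the probability mass assigned to $H$ by Mechanism~\ref{mech:2} coincides exactly with the expression in~\eqref{formula1}. Since both mechanisms are distributions on the finite set $\mathcal{G}_n$, equality of all point masses establishes equality of the induced distributions.

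First, I would exploit the fact that Mechanism~\ref{mech:2} generates edges \emph{independently} across the $\binom{n}{2}$ possible pairs. For a fixed $G$ and $H$, partition the potential edge set $V \times V$ (unordered pairs) into two classes according to whether $G$ and $H$ agree or disagree on each pair. Write $k = \uti\bigl(E(G), E(H)\bigr)$ for the number of disagreements, so that the number of agreements is $\binom{n}{2} - k$. Whether the agreement corresponds to both graphs containing the edge or both omitting it, Mechanism~\ref{mech:2} assigns it probability $p = \frac{1}{1+\exp(-\varepsilon/A)}$; whether the disagreement corresponds to an edge present in $G$ but absent in $H$ or the reverse, Mechanism~\ref{mech:2} assigns it probability $1-p = \frac{\exp(-\varepsilon/A)}{1+\exp(-\varepsilon/A)}$. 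By independence,
\begin{equation}
\Prob[\mathcal{M}_E(G) = H] = p^{\binom{n}{2}-k}\,(1-p)^{k}.
\end{equation}

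Next, I would simplify the right-hand side. The $\binom{n}{2} - k$ factors of $p$ and $k$ factors of $1-p$ share a common denominator of $\bigl(1+\exp(-\varepsilon/A)\bigr)^{\binom{n}{2}}$, while the numerator collects to $\exp(-k\varepsilon/A)$. Invoking Definition~\ref{def:util} to write $u(G,H) = -k$ and Lemma~\ref{lem:sensitivity}'s tight bound $\Delta u = A$ (used implicitly in Mechanism~\ref{mech:em}), this rearranges to
\begin{equation}
\Prob[\mathcal{M}_E(G) = H] = \frac{\exp\bigl(\tfrac{\varepsilon u(G,H)}{A}\bigr)}{\bigl(1+\exp(-\tfrac{\varepsilon}{A})\bigr)^{\binom{n}{2}}},
\end{equation}
which is precisely~\eqref{formula1}. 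Since $H$ was arbitrary, the two mechanisms induce the same law on $\mathcal{G}_n$.

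There is no real obstacle here beyond bookkeeping; the content of the result is that the product form of independent Bernoulli trials with the two carefully chosen probabilities in~\eqref{eq:P_eH2} collapses to a function of the Hamming-type distance $\uti\bigl(E(G),E(H)\bigr)$, which is exactly the utility driving Mechanism~\ref{mech:em}. The one place to be slightly careful is in consistently treating both types of ``agreement'' (edge present in both graphs, or absent from both) as contributing the same factor $p$, and both types of ``disagreement'' as contributing $1-p$; this symmetry is what allows the exponent on $(1-p)$ to match $\uti(E(G), E(H))$ rather than, say, an edge-count difference that would depend separately on $|E(G)|$ and $|E(H)|$.
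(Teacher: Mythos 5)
Your proof is correct, but it takes a genuinely different and more direct route than the paper's. You fix a single candidate output $H$, use the independence of the $\binom{n}{2}$ edge decisions to write $\Prob[\mathcal{M}_E(G)=H]=p^{\binom{n}{2}-k}(1-p)^k$ with $k=\uti(E(G),E(H))$, and then substitute the specific $p$ from~\eqref{eq:P_eH2} to recover~\eqref{formula1} pointwise; your observation that both kinds of agreement contribute $p$ and both kinds of disagreement contribute $1-p$ is exactly the right bookkeeping step. The paper instead aggregates: it compares the probabilities $\Prob_k^{\G}$ and $\Prob_k^E$ that each mechanism outputs \emph{some} graph at distance $k$ from $G$, which requires counting $|\mathcal{X}(G,k)|=\binom{\binom{n}{2}}{k}$ on one side and summing over all $(\ell,m)$ splits of $k$ into deletions and additions on the other, collapsing that sum via the Chu--Vandermonde identity; it then treats $p$ as an unknown and solves for it by equating the two expressions. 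Your approach buys a cleaner argument that directly establishes equality of point masses (the strongest sense of ``same distribution'') and avoids the combinatorial identity entirely, whereas the paper's derivation has the expository advantage of \emph{deriving} the value of $p$ rather than verifying it. One small inaccuracy worth fixing: Lemma~\ref{lem:sensitivity} establishes only $\Delta u\leq A$, not $\Delta u=A$; this is immaterial to your argument because~\eqref{formula1} and~\eqref{eq:P_eH2} are both stated directly in terms of $A$, so you can simply drop that appeal to the lemma.
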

    
\begin{proof}    
    Given $G \in \G_n$ and $H \in \G_n$ with ${u(G,H)=-k}$, we analyze the quantities
    \begin{equation}
        \Prob_k^\mathcal{G} = \mathbb{P}[\mathcal{M}_\mathcal{G}(G)=H \ s.t. \ u(G,H)=-k]\label{eq:mech1_initP}
    \end{equation}
    and
    \begin{equation}
        \Prob_k^E = \mathbb{P}[\mathcal{M}_E(G)=H \ s.t. \ u(G,H)=-k] \label{eq:mech2_initP}
    \end{equation}
    for~$k \in \mathbb{N}$. 
    Here, $\Prob_k^\mathcal{G}$ and $\Prob_k^E$ denote the probabilities that the outputs of Mechanisms~\ref{mech:em} and~\ref{mech:2} have utility ${-k}$,  respectively. 
    To establish the equivalence of Mechanism~\ref{mech:em} and Mechanism~\ref{mech:2},
    we show that $\Prob_k^\mathcal{G}=\Prob_k^E$ for an arbitrary~$k \in \mathbb{N}$. 
    
    For Mechanism~\ref{mech:em} we have 
    \begin{align}
        \Prob_k^\mathcal{G}&=\sum_{H\in\mathcal{X}(G,k)}\mathbb{P}\big[\M_{\G}(G) = H\big]\\
        &=\frac{{\binom{\binom{n}{2}}{k}} \exp(-\frac{\varepsilon k}{A})}{\biggl(1+\exp(-\frac{\varepsilon}{A})\biggr)^{\binom{n}{2}}} \label{eq:P_k1},
    \end{align}
    where $\mathcal{X}(G,k)$ is from~\eqref{eq:X}. 
    Equation~\eqref{eq:P_k1} follows using~$|\mathcal{X}(G,k)|=\binom{\binom{n}{2}}{k}$,  
    substituting the probability of outputting $H\in\mathcal{X}(G,k)$ from~\eqref{formula1}, and using $u(G,H) = -k$. 
    
    For Mechanism~\ref{mech:2}, we start from the general form~\eqref{eq:P_eH1}.
    Suppose that~$G$ and~$H$ differ in~$k$ edges, where~$H$ contains~$m$
    edges that are not in~$G$, $H$ is missing~$\ell$ edges that are in~$G$,
    and~$k = \ell + m$. 
    
    The probability of adding $m$ edges to 
    $E(H)$ that were not in 
    $E(G)$ is 
    \begin{equation}
       { \binom{|E(\bar{G})|}{m} p^{|E(\bar{G})|-m} (1-p)^{m},}
    \end{equation}
    where $\bar{G}$ is the complementary graph of $G$ and $|E(\bar{G})| = \binom{n}{2}-|E(G)|.$
    The probability that~$E(H)$ is missing~$\ell$ edges that were in~$E(G)$ is 
    \begin{equation}
    {\binom{|E(G)|}{\ell} p^{|E(G)|-\ell} (1-p)^{\ell}.}
    \end{equation}
    Then summing over every possible combination of~$k = \ell + m$ edge additions and edge deletions, $\mathbb{P}_k^E$ is given by
    \begin{equation}
        \mathbb{P}_k^E = \sum_{\ell=0}^k \binom{|E(G)|}{\ell} \binom{|E(\bar{G})|}{k-\ell} p^{|E(G)|+|E(\bar{G})|-k} (1-p)^{k} .
    \end{equation}
    Substituting in~$k = \ell+m$ and~$|E(G)| + |E(\bar{G})| = \binom{n}{2}$ 
    gives 
    \begin{align} 
         \mathbb{P}_k^E &= p^{{\binom{n}{2}}-k} (1-p)^k \sum_{\ell=0}^k \binom{|E(G)|}{\ell} \binom{|E(\bar{G}|}{m}\\
         & = p^{{\binom{n}{2}}-k} (1-p)^k  {\binom{\binom{n}{2}}{k}},\label{eq:P_k2}
    \end{align}
    where~\eqref{eq:P_k2} follows from the Chu-Vandermonde identity~\cite[Exercise 3.2(a)]{koepf2014}.    

    To derive an explicit expression for $p$ that sets~$\Prob_k^\mathcal{G}=\Prob_k^E$,
    we set~\eqref{eq:P_k1} and~\eqref{eq:P_k2} equal to each other to find
    \begin{equation}
        \frac{{\binom{\binom{n}{2}}{k}} \exp(- \frac{\varepsilon k}{A})}{\biggl(1+\exp(-\frac{\varepsilon}{A})\biggr)^{\binom{n}{2}}} = p^{{\binom{n}{2}}-k} (1-p)^k {\binom{\binom{n}{2}}{k}}.
    \end{equation}
    Dividing by ${\binom{\binom{n}{2}}{k}}$ and taking logs of both sides, we reach
    \begin{multline}
        -{\binom{n}{2}} \log\biggl(1+\exp\left(-\frac{\varepsilon}{A}\right) \biggr)-\frac{k\varepsilon}{A} \\
        = \left({\binom{n}{2}}-k\right) \log(p) + k\log(1-p), 
    \end{multline}
    which holds for any~$k \in \{0, \ldots, \binom{n}{2}\}$. For simplicity, we set~$k=0$ to obtain
    \begin{equation}
        -{\binom{n}{2}}\log\biggl(1+\exp\left(-\frac{\varepsilon}{A}\right)\biggr) =  {\binom{n}{2}}\log(p),
    \end{equation}
    which gives
    \begin{equation}
        p = \frac{1}{1+\exp(-\frac{\varepsilon}{A})},
    \end{equation}
    which is the same form for~$p$ presented in \eqref{eq:P_eH2}. Then Mechanism \ref{mech:em} and Mechanism \ref{mech:2} are equivalent for any privacy parameter $\varepsilon \ge 0$ and adjacency parameter $A\in \mathbb{N}$. 
    It can be verified that this equivalence holds all~$k \in \{0, \ldots, \binom{n}{2}\}$.
\end{proof}

Theorem~\ref{thm:equiv} shows that Mechanisms~\ref{mech:em} and~\ref{mech:2} are equivalent. As demonstrated in Theorem~\ref{thm:mech1_dp}, Mechanism~\ref{mech:em} enforces~$\varepsilon$-differential privacy. 
Therefore, these theorems together imply that graphs generated with Mechanism~\ref{mech:2} also keep
a sensitive graph $\varepsilon$-differentially private.

We observe that the limiting behavior of~$\varepsilon$ is intuitive. 
First recall that privacy is strengthened when~$\varepsilon$ is smaller. 
For~$\varepsilon = \infty$, privacy is absent and~$p=1$, 
which means that Mechanisms~\ref{mech:em}
and~\ref{mech:2} both simply output their sensitive input graphs with no modifications. 
Conversely, for~$\varepsilon = 0$, any two adjacent input graphs must produce private output
graphs with identical statistics. This condition is indeed enforced because~$p = \frac{1}{2}$
when~$\varepsilon = 0$, which indicates that each privacy mechanism ignores the sensitive
input graph and samples an output graph uniformly from the space of output graphs. 

Algorithm~\ref{alg:mech2} (which implements Mechanism~\ref{mech:2}) constructs $H$ by performing a biased coin flip for each possible edge.
After computing the probabilities in~\eqref{eq:P_eH2} in constant time, the algorithm executes $\binom{n}{2}$ constant-time steps.
Therefore the time complexity of Algorithm~\ref{alg:mech2} is $\Theta(n^2).$
In contrast, a naive implementation of Mechanism~\ref{mech:em} 
has complexity~$\Theta(2^{\binom{n}{2}})$, which makes such an implementation infeasible for even moderate graph sizes.
Overall, Mechanism~\ref{mech:2} reduces the complexity of sampling 
private output graphs
from~$\Theta(2^{\binom{n}{2}})$ to 
$\Theta(n^2)$.

\section{Results for a Real-World Complex Network} \label{sec:IV}
In this section, we present numerical simulations in which we privately compute the Laplacian spectrum 
for a graph that models a Facebook user's network of friends. Data for these simulations is available at~\cite{snapnets}.  We compare the accuracy of our proposed mechanism with the accuracy of the mechanism from~\cite{hawkins2023node} that was designed to privatize Laplacian spectra and is the current
state of the art for doing so. 
This section solves Problem~\ref{prob:3}.

\subsection{Privacy Mechanism Setup}
First, we define the graph Laplacian and its spectrum.
Given a graph $G \in \G_n$, let $d_i=|\{j\in V \ | \ (i,j)\in E\}|$ denote the degree of node $i\in V$. The degree matrix $D(G)\in \mathbb{R}^{n\times n}$ is the diagonal matrix $D(G)=\textnormal{diag}(d_1,\dots,d_n).$ The adjacency matrix of $G$ is 
\begin{equation}
    \big(Z(G)\big)_{ij}= \begin{cases}
        1 \quad \textnormal{if} \ (i,j)\in E \\
        0 \quad \textnormal{otherwise}
    \end{cases}.
\end{equation}

The Laplacian of a graph~$G$ is defined as~$L(G)=D(G)-Z(G)$, where~$L$ is symmetric and hence has real eigenvalues. 
Let the eigenvalues of~$L$ be ordered according to~$\lambda_1 \leq \lambda_2 \leq \cdots \leq \lambda_n$. The matrix $L$ is positive semi-definite, and thus~$\lambda_i \geq 0$ for all~$i\in\{1,2,\dots, n\}$.
Furthermore, every undirected graph $G$ has $\lambda_1 = 0$, which makes its value non-sensitive because
it is graph-independent. We therefore seek to provide privacy to $\{\lambda_2, \dots, \lambda_n\}$. 

The bounded Laplace mechanism from~\cite{hawkins2023node} privatizes~$\{\lambda_2,\dots,\lambda_n\}$ by adding bounded Laplace noise independently to each eigenvalue.
Privately computing these eigenvalues requires
first computing each eigenvalue and then adding noise to it, which amounts to conducting~$(n-1)$ queries of the underlying sensitive graph $G$.
Lemma~\ref{lemma:comp} shows that if the bounded Laplace mechanism provides $\lambda_i$ with $\varepsilon_{bl}$-differential privacy
for each~$i \in \{2, \ldots, n\}$, 
then the underlying graph is protected by $(n-1)\varepsilon_{bl}$-differential privacy overall. 

On the other hand, Mechanism~\ref{mech:2} generates an entire private graph~$\mathcal{M}_E(G)$ while guaranteeing $\varepsilon$-differential privacy. Since differential privacy is immune to post-processing according to Lemma~\ref{lemma:immunity}, any property/properties of $\mathcal{M}_E(G)$ can be computed
and $\varepsilon$-differential privacy is preserved.
To implement Mechanism~\ref{mech:2}, we synthesize a private graph $\mathcal{M}_E(G)$ with $\varepsilon$-differential privacy, and compute $\{\lambda_2,\dots,\lambda_n\}$ using the Laplacian of $\mathcal{M}_E(G)$, which is simply post-processing. 

For Mechanism~\ref{mech:2} and the bounded Laplace mechanism from~\cite{hawkins2023node} to provide equivalent levels of privacy to 
the underlying graph~$G$, they must satisfy
\begin{equation}
    \varepsilon=(n-1)\varepsilon_{bl}.\label{eq:fair_eps}
\end{equation}
Calibrating both mechanisms according to~\eqref{eq:fair_eps} provides a fair comparison, 
and we now compare the accuracy of these mechanisms using real data.
For both mechanisms, 
we denote privatized eigenvalues by~$\{\tilde\lambda_2,\dots,\tilde\lambda_n\}.$

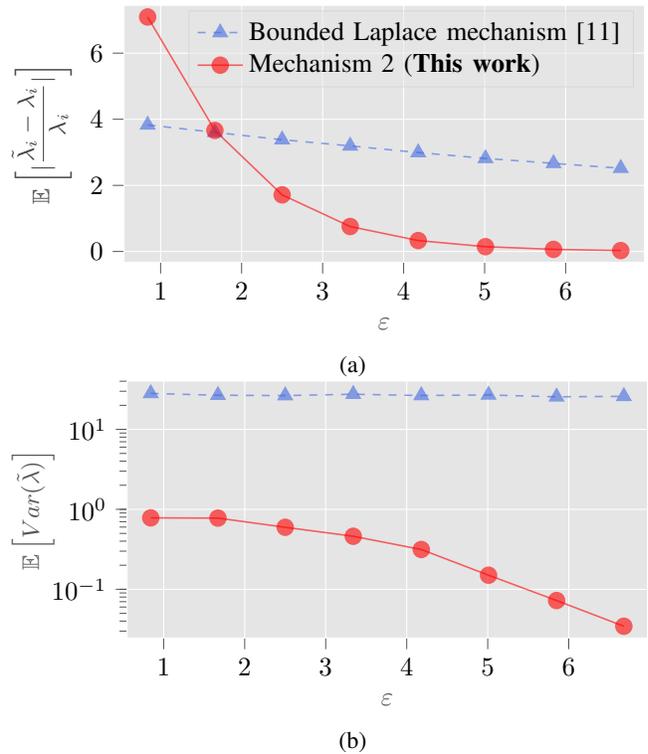
\begin{figure}
\setlength{\figH}{5cm}
    \setlength{\figW}{8.5cm}
    \captionsetup[subfigure]{justification=centering}
    \centering
    \begin{subfigure}{.4\textwidth}
        \centering
        \hspace*{-1.5cm}
        \definecolor{dimgray85}{RGB}{85,85,85}
\definecolor{gainsboro229}{RGB}{229,229,229}
\definecolor{lightgray204}{RGB}{204,204,204}
\definecolor{royalblue}{RGB}{65,105,225}
\begin{tikzpicture}

\begin{axis}[%
axis background/.style={fill=gainsboro229},
axis line style={white},
height=\figH,
legend cell align={left},
legend style={fill opacity=0.5, draw opacity=0.5, text opacity=1, draw=lightgray204, fill=gainsboro229},
tick align=outside,
tick pos=left,
width=\figW,
x grid style={white},
xmajorgrids,
xmin=0.548, xmax=6.972,
xtick style={color=dimgray85},
xlabel=\textcolor{dimgray85}{\(\displaystyle \varepsilon\)},
y grid style={white},
ylabel=\textcolor{darkgray}{\(\displaystyle \mathbb{E}\left[ |\frac{\tilde{\lambda}_i-\lambda_i}{\lambda_i} |\right]\)},
ymajorgrids,
ymin=-0.325504989468746, ymax=7.44686416660689,
ytick style={color=dimgray85},
ylabel style={font=\small},
]
\addplot [semithick, royalblue, opacity=0.6, dashed, mark=triangle*, mark size=3, mark options={solid}]
table {%
0.84 3.82900220222397
1.67 3.60374999620344
2.5 3.3809986705849
3.34 3.19442036033197
4.18 2.99280832605078
5.01 2.81577534187749
5.85 2.66589898259348
6.68 2.51891761199688
};
\addlegendentry{Bounded Laplace mechanism~\cite{hawkins2023node}}
\addplot [semithick, red, opacity=0.6, mark=*, mark size=3, mark options={solid}]
table {%
0.84 7.09387195424129
1.67 3.66208877966479
2.5 1.7129677017537
3.34 0.755293215168772
4.18 0.330414357463731
5.01 0.145727527361709
5.85 0.0641638730601437
6.68 0.0277845176256011
};
\addlegendentry{Mechanism~\ref{mech:2} (\textbf{This work})}
\end{axis}
\end{tikzpicture}%
        \caption{}
        \label{fig:expn}
    \end{subfigure}
    \begin{subfigure}{0.4\textwidth}
        \centering
        \hspace*{-1.5cm}
        \definecolor{dimgray85}{RGB}{85,85,85}
\definecolor{gainsboro229}{RGB}{229,229,229}
\definecolor{lightgray204}{RGB}{204,204,204}
\definecolor{royalblue}{RGB}{65,105,225}
\begin{tikzpicture}

\begin{axis}[%
axis background/.style={fill=gainsboro229},
axis line style={white},
height=\figH,
log basis y={10},
tick align=outside,
tick pos=left,
width=\figW,
x grid style={white},
xlabel=\textcolor{dimgray85}{\(\displaystyle \varepsilon\)},
xmajorgrids,
xmin=0.548, xmax=6.972,
xtick style={font=\tiny, color=dimgray85},
y grid style={white},
ylabel=\textcolor{dimgray85}{\(\displaystyle \mathbb{E}\left[Var(\tilde\lambda)\right]\)},
ylabel style={font=\small},
ymajorgrids,
ymin=0.0246516325901209, ymax=40.2889835901693,
ymode=log,
ytick style={color=dimgray85},
ylabel style={font=\small, yshift=-8},
yshift=-1cm
]
\addplot [semithick, royalblue, opacity=0.6, dashed, mark=triangle*, mark size=3, mark options={solid}]
table {%
0.84 28.1589098489275
1.67 26.7189335706497
2.5 26.4538950820796
3.34 27.5150916777204
4.18 26.5120647858941
5.01 26.9312314262206
5.85 25.5128348834257
6.68 25.8890252076501
};
\addplot [semithick, red, opacity=0.6, mark=*, mark size=3, mark options={solid}]
table {%
0.84 0.78200464241898
1.67 0.776927207731
2.5 0.596403945096723
3.34 0.460508221777889
4.18 0.314510501078831
5.01 0.150040208598064
5.85 0.0723548144207777
6.68 0.0345069539145854
};
\end{axis}
\end{tikzpicture}%
        \caption{}
        \label{fig:var}
    \end{subfigure}
    \caption{The (a) empirical mean and (b) empirical variance of the error in~$\{\tilde\lambda_2,\dots,\tilde\lambda_{168}\}$ for privacy parameters 
    $\varepsilon \in \big\{\varepsilon_l \mid \varepsilon_l = 0.835l, \, l \in \{1,2, \dots, 8\}\big\}$.}
    \label{fig:ExpnVar}
\end{figure} 

\subsection{Experiments with a Real-World Complex Network}
Let~$G\in \G_{168}$ be the graph from~\cite{snapnets} with user ID 686. This graph corresponds to a specific Facebook user's network
of friends; see~\cite{snapnets} for more details.
The graph is undirected, has~$n=168$ nodes, and~$|E(G)|=1,656$ edges. We fix the adjacency parameter $A=1$ and sweep over multiple values of $\varepsilon$.
Specifically, for Mechanism~\ref{mech:2} we use $\varepsilon \in \big\{ \varepsilon_l \mid \  \varepsilon_l = 0.835l, \, l \in \{1,2, \dots, 8\} \big\}$ and for the bounded Laplace mechanism we use $\varepsilon_{bl} = \varepsilon/(n-1)$ according to~\eqref{eq:fair_eps}.
For each $\varepsilon$, we generate~${M=1,000}$ private spectra~$\{\tilde\lambda_2,\dots,\tilde\lambda_n\}$
with Mechanism~\ref{mech:2}, and we do the same for the bounded Laplace mechanism from~\cite{hawkins2023node}. 
To measure accuracy we use each mechanism to compute $\frac{1}{n-1}\sum_{i=2}^n |\frac{\tilde{\lambda}_i-\lambda_i}{\lambda_i}|$ for each private spectrum
that we generate, and then we compute the average of this quantity over all $M$ spectra. These results are presented in Figure~\ref{fig:expn}
for a range of values of~$\varepsilon$. 
We also measure the empirical variance. 
For each~$i \in [168] \backslash \{1\}$, 
let $Var(\tilde{\lambda_i})$ be the variance 
of the private values of~$\lambda_i$ 
computed over $M=1,000$ samples. 
The value  $\mathbb{E}\left[Var(\tilde\lambda)\right]=\frac{1}{n-1}\sum_{i=2}^{n}Var(\tilde\lambda_i)$ is shown in Figure~\ref{fig:var}
for a range of values of~$\varepsilon$.

Figure~\ref{fig:expn} shows that when~$\varepsilon>1.67$, the proposed mechanism achieves lower error than the bounded Laplace mechanism. 
Figure~\ref{fig:var} shows that, for all 
values of $\varepsilon$ tested,
the average variance $\mathbb{E}[Var(\tilde{\lambda}_i)]$ is more than an order of magnitude lower
for Mechanism~\ref{mech:2} than it is for the bounded Laplace mechanism from~\cite{hawkins2023node}. 
The accuracy of our proposed mechanism is only worse than the bounded Laplace mechanism for the 
smallest values of~$\varepsilon$ tested.
In Figure~\ref{fig:expn}, when~$\varepsilon=2.50,$  the proposed mechanism achieves $49.34\%$ less error than the bounded Laplace mechanism, which
demonstrates the substantially improved accuracy of our mechanism for typical levels of privacy.
Indeed, $\varepsilon = 2.50$ is within conventionally desirable ranges~\cite{hsu2014differential} for~$\varepsilon$, and the error we incur
is approximately half of that incurred by the state-of-the-art mechanism in~\cite{hawkins2023node}.
Mechanism~\ref{mech:2} therefore attains strong privacy and high accuracy, as desired. 


\section{Conclusion}\label{sec:V}
This paper presented a differentially private mechanism for privatizing the edge sets of undirected complex networks. We first defined a utility function and then proposed the graph exponential mechanism. To make it computationally efficient, we developed a modified E-R model to independently add each edge
to a private output graph. 
Numerical results showed that the proposed mechanism is more accurate 
than the state of the art
for computing the private spectra of graph Laplacians. 
Future work includes generalizing the proposed mechanism to directed and weighted graphs, and using it to compute other algebraic graph properties.

\balance

\bibliography{root}
\bibliographystyle{ieeetr}

\end{document}